\documentclass[11pt]{article}
\usepackage{amsmath,amsthm,amssymb}
\usepackage[margin=1in]{geometry}
\usepackage{enumitem}
\setlist[enumerate]{label=(\arabic*),itemsep=2pt,topsep=4pt,leftmargin=2.5em}
\numberwithin{equation}{section}

\theoremstyle{plain}
\newtheorem{theorem}{Theorem}[section]
\newtheorem{lemma}[theorem]{Lemma}

\theoremstyle{definition}
\newtheorem{definition}[theorem]{Definition}
\newtheorem{example}[theorem]{Example}
\theoremstyle{remark}

\title{\textbf{A Complete Characterization of Realizable (Embedding Dimension, Multiplicity) Pairs for Complete Intersection Numerical Semigroups}}
\author{Minglang Li and Yizhi Zhang}
\date{\today}

\begin{document}

\maketitle
\begin{abstract}
We give a complete characterization of the positive integer pairs $(e,m)$ that occur as the (embedding dimension, multiplicity) of a complete intersection numerical semigroup, thereby settling an open problem in the theory of relations of numerical semigroups. We prove that there exists a complete intersection numerical semigroup $\Gamma$ with $e(\Gamma)=e$ and $m(\Gamma)=m$ if and only if $(e,m)=(1,1)$ or $e\ge 2$ and $m\ge 2^{e-1}$.
\end{abstract}

\noindent\textbf{Keywords:} complete intersection; numerical semigroup; embedding dimension; multiplicity; gluing; semigroup ring

\noindent\textbf{MSC(2020):} 20M14 (primary); 13A02, 13D02 (secondary)

\section{Introduction}\label{sec:intro}

Let $\mathbb{N}$ denote the set of nonnegative integers. A numerical semigroup $\Gamma$ is a cofinite additive submonoid of $(\mathbb{N},+)$ containing $0$; equivalently, $\Gamma=\langle A\rangle$, where $A\subset\mathbb{N}$ is a finite set with $\gcd(A)=1$. Every numerical semigroup $\Gamma$ has a unique minimal generating set, denoted by $\mathrm{Msg}(\Gamma)$. The embedding dimension $e(\Gamma)=|\mathrm{Msg}(\Gamma)|$ and the multiplicity $m(\Gamma)=\min(\Gamma\setminus\{0\})$ are two fundamental invariants of $\Gamma$.

Let $A=\{a_1,\dots,a_n\}$ be the minimal generating set of $\Gamma$. The map $\varphi\colon\mathbb{N}^n\to\Gamma$, $(c_1,\dots,c_n)\mapsto c_1a_1+\cdots+c_na_n$, is surjective; a relation among the generators of $\Gamma$ (with respect to $A$) is a pair $(u,v)\in\mathbb{N}^n\times\mathbb{N}^n$ satisfying $u_1a_1+\cdots+u_na_n=v_1a_1+\cdots+v_na_n$; it records two different representations of the same element. All such relations form a congruence on $\mathbb{N}^n$; if a set of relations generates this congruence, it is called a presentation of $\Gamma$, and a presentation with the fewest relations is called minimal. Let $\rho(\Gamma)$ denote the number of relations in a minimal presentation of $\Gamma$, i.e., the minimum of the number of relations over all presentations. If $\rho(\Gamma)=e(\Gamma)-1$, then $\Gamma$ is called a complete intersection; equivalently, the local semigroup ring $R_\Gamma=k[\![\Gamma]\!]$ over a field $k$ is a complete intersection ring, i.e., its minimal free resolution is the Koszul complex.

Moscariello and Sammartano posed the following problem (Problem 29) in their survey \cite{MS24}, attributing it to Question 7.5 of Elmacioglu--Hilmer--O'Neill--Okandan--Park-Kaufmann \cite{EHOOPK24}: characterize all pairs $(e,m)$ of positive integers for which there exists a complete intersection numerical semigroup $\Gamma$ with $e(\Gamma)=e$ and $m(\Gamma)=m$.

In this paper we study the following problem: which pairs $(e,m)$ of positive integers are realized by a complete intersection numerical semigroup, i.e., for which pairs does there exist a complete intersection numerical semigroup $\Gamma$ with $e(\Gamma)=e$ and $m(\Gamma)=m$?

The problem rests on the following classical results.
\begin{theorem}\label{thm:delorme}(\cite[Proposition 9]{Del76})
A numerical semigroup is a complete intersection if and only if it is $\mathbb{N}$, or it is obtained by gluing two complete intersection numerical semigroups.
\end{theorem}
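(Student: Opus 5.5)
The plan is to prove the two implications separately. Throughout, $\Gamma=d_1\Gamma_1+d_2\Gamma_2$ is a gluing when $d_1\in\Gamma_2\setminus\mathrm{Msg}(\Gamma_2)$, $d_2\in\Gamma_1\setminus\mathrm{Msg}(\Gamma_1)$, $\gcd(d_1,d_2)=1$, and $A=d_1A_1\sqcup d_2A_2$ is the minimal generating set, with $A_i=\mathrm{Msg}(\Gamma_i)$. I also use the lattice $L_\Gamma=\{u-v : (u,v)\text{ a relation}\}\subset\mathbb{Z}^{e}$, which has rank $e-1$.

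\emph{Sufficiency} (gluing of complete intersections is a complete intersection; also $\mathbb{N}$ trivially is one, with $e=1,\rho=0$).
\begin{enumerate}
\item Fix an element $c=d_1d_2$. Write $c$ once as a combination $u$ of $d_1A_1$ and once as a combination $v$ of $d_2A_2$; this is possible because $d_2\in\Gamma_1$ and $d_1\in\Gamma_2$.
\item Show that any relation of $\Gamma$ can be reduced, using relations of $\Gamma_1$, relations of $\Gamma_2$ and the single relation $(u,v)$, to the trivial one. The key fact is the following. If $x\in d_1\Gamma_1\cap d_2\Gamma_2$, then $d_1d_2\mid x$, by coprimality. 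Moreover $x/(d_1d_2)$ lies in both $\Gamma_1$ and $\Gamma_2$ after the appropriate scaling, so $x$ is a multiple of $c$ in both pieces.
\item Conclude by induction on the element's value that $\sigma_1\cup\sigma_2\cup\{(u,v)\}$ is a presentation, where $\sigma_1,\sigma_2$ are minimal presentations of $\Gamma_1,\Gamma_2$. Hence $\rho(\Gamma)\le(e_1-1)+(e_2-1)+1=e-1$.
\item Use the general inequality $\rho\ge e-1$ to get equality. This inequality holds because a presentation must generate $L_\Gamma$ as a group.
\end{enumerate}

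\emph{Necessity.} Let $\Gamma\neq\mathbb{N}$ be a complete intersection with $e\ge2$, and let $\sigma=\{(u_i,v_i)\}_{i=1}^{e-1}$ be a minimal presentation. The vectors $u_i-v_i$ generate $L_\Gamma$, which has rank $e-1$, so they are linearly independent. I plan to order the relations by value $n_i=\varphi(u_i)$ and single out one relation $(u_k,v_k)$ of maximal value. I then consider the graph on the generators $\{1,\dots,e\}$ in which the supports of the remaining $e-2$ relations are joined.
\begin{enumerate}
\item Claim: this graph has exactly two connected components $A_1\sqcup A_2$, and every remaining relation is supported inside one component. Given this, $u_k$ and $v_k$ must have supports in different components.
\item Let $\sigma^{(j)}$ be the set of remaining relations supported on $A_j$. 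Their differences are linearly independent vectors in the lattice of $\langle A_j\rangle$, which has rank $|A_j|-1$. So $|\sigma^{(j)}|\le|A_j|-1$, and since the counts add to $e-2$, equality holds in both.
\item Set $d_j=\gcd(A_j)$ and $\Gamma_j=\langle A_j/d_j\rangle$. Show $\sigma^{(j)}$ presents $\Gamma_j$. The reason is that a relation among elements of $A_j$ lies in $L_\Gamma$, so it is a combination of elements of $\sigma$. Because $\sigma$ generates the congruence and the maximal relation involves both components, it cannot be used, and a degree argument finishes this step.
\item It follows that $\Gamma_j$ has $\rho=e_j-1$, i.e.\ is a complete intersection. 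Finally, the relation $(u_k,v_k)$ forces $d_1d_2$ to be the common value and yields the gluing conditions on $d_1,d_2$ and their coprimality.
\end{enumerate}

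The main obstacle is the Claim in the first necessity step: ruling out relations whose support straddles both parts, and showing the components number exactly two. The first thing I would try is a rank count on each component, combined with connectivity. Connectivity follows because $\gcd(A)=1$ and the fact that the graph of all $e-1$ relations must be connected; otherwise $L_\Gamma$ would split as a direct sum with total rank at most $e-2$. The $e-1$ relations then have supports forming a connected hypergraph that is \emph{tight} in rank, i.e.\ tree-like. Deleting one hyperedge should split it into exactly two pieces if the deleted relation has support meeting only two blocks. Choosing the relation of maximal value, a Betti element, is where I expect to need the Herzog/Delorme-type argument that its two sides lie in different components. If this gets stuck, I would fall back on an induction on $e$ via the characterization of complete intersection rings by regular sequences of binomials.
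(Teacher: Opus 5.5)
The paper does not prove this statement: it is quoted from Delorme, and only parts (1)--(2) of Lemma~\ref{lem:gluing} are proved in the text. Your proposal therefore has to stand on its own, and its necessity half has a genuine gap exactly at the choice of the distinguished relation. Deleting a relation of maximal value need not disconnect anything. Take $\Gamma=\langle 4,5,6\rangle=2\langle 2,3\rangle+5\mathbb{N}$. Its Betti elements are $10$ ($5+5=4+6$) and $12$ ($4+4+4=6+6$), each with exactly two factorizations, so its minimal presentation is unique. Deleting the relation of value $12$ leaves $5+5=4+6$, which involves all three generators, so the remaining graph is connected. The only gluing decomposition, $\{4,6\}\sqcup\{5\}$, is along the \emph{smaller} Betti element $10$. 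Choosing the minimal value fails as well: in $\langle 4,6,7\rangle=2\langle 2,3\rangle+7\mathbb{N}$ the two relations are $4+4+4=6+6$ and $7+7=4+4+6$, and only deleting the larger one splits the generators. More fundamentally, the tools you list (connectivity, at most two components after one deletion, rank tightness) cannot produce a splitting relation at all. Two relations on three generators, each supported on all three, pass every one of these tests, yet no deletion disconnects. Excluding such configurations is the real arithmetic content of the theorem; for $e=3$ it is essentially Herzog's description of three-generated complete intersections. The proposal defers this to an unspecified ``Herzog/Delorme-type argument'' or a regular-sequence fallback instead of supplying it.

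Even once a splitting relation $(u_k,v_k)$ is in hand, two further steps are asserted rather than proved. The first is that $\mathrm{supp}(u_k)$ and $\mathrm{supp}(v_k)$ then lie in different components. The second is that $\sigma^{(j)}$ presents $\langle A_j\rangle$. For the second, the bridging relation can perfectly well occur in a chain of moves between two factorizations in $\mathbb{N}^{A_1}$. So the point is not that it ``cannot be used'' but that such chains can be rerouted. A cleaner path is to read off $\varphi(u_k)=d_1d_2\in\langle A_1\rangle\cap\langle A_2\rangle$ from the lattice: the map $y\mapsto\varphi(y|_{A_1})$ sends $L_\Gamma$ onto $d_1d_2\mathbb{Z}$ and kills the other $e-2$ differences. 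From this, conclude that $\Gamma$ is a gluing, and then invoke $\rho(\Gamma)=\rho(\Gamma_1)+\rho(\Gamma_2)+1$ for gluings.

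Your sufficiency half is the standard argument and is fine in outline. However, the fact isolated in its step (2) only covers elements with one factorization inside $d_1A_1$ and another inside $d_2A_2$. Read literally, $x/(d_1d_2)$ need not lie in $\Gamma_1$: for $2\langle 2,3\rangle+5\mathbb{N}$ and $x=10$ it equals $1$. What the reduction actually uses is the following. Take arbitrary factorizations with $d_1\varphi_1(z_1)+d_2\varphi_2(z_2)=d_1\varphi_1(w_1)+d_2\varphi_2(w_2)$, where $\varphi_j$ is the factorization map of $\Gamma_j$. Then $\varphi_2(w_2)-\varphi_2(z_2)=kd_1$ with, say, $k\ge 0$. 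Since $d_1\in\Gamma_2$, $\sigma_2$ moves $w_2$ to $z_2+k\alpha$, where $\alpha$ is a factorization of $d_1$. Next, $k$ applications of $(u,v)$ replace $k\alpha$ by $k\beta$, where $\beta$ is a factorization of $d_2$ in $\Gamma_1$. Finally $\sigma_1$ finishes, and no induction on the value is needed.
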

The standard facts on gluings can be found in \cite[\S 8.3]{RGS09} and \cite[\S 1]{AGS13}.

\begin{lemma}\label{lem:lower-bound}(\cite[Proposition 5]{AGS13})
Every complete intersection numerical semigroup satisfies $m(\Gamma)\ge 2^{e(\Gamma)-1}$, and this bound is attained.
\end{lemma}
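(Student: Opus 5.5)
We prove the bound by induction on $e=e(\Gamma)$, using Theorem~\ref{thm:delorme}; attainment is shown by an explicit family.

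\textbf{Setup of the induction.} For $e=1$ we have $\Gamma=\mathbb{N}$, so $m=1=2^0$. For $e\ge 2$, Theorem~\ref{thm:delorme} writes $\Gamma$ as a gluing $\Gamma=d_1\Gamma_1+d_2\Gamma_2$. Here $\Gamma_1,\Gamma_2$ are complete intersections, $d_1\in\Gamma_2\setminus\mathrm{Msg}(\Gamma_2)$, $d_2\in\Gamma_1\setminus\mathrm{Msg}(\Gamma_1)$, and $\gcd(d_1,d_2)=1$. The minimal generating set of $\Gamma$ is the disjoint union $d_1\mathrm{Msg}(\Gamma_1)\cup d_2\mathrm{Msg}(\Gamma_2)$. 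Hence $e=e_1+e_2$ with $e_i=e(\Gamma_i)\ge1$, and
\[
m(\Gamma)=\min\{d_1m(\Gamma_1),\,d_2m(\Gamma_2)\}.
\]

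\textbf{Bounding the gluing factors.} The key observation is that $d_1$ is a nonzero element of $\Gamma_2$ that is not a minimal generator. Such an element is a sum of at least two nonzero elements of $\Gamma_2$, so $d_1\ge 2m(\Gamma_2)$; symmetrically $d_2\ge 2m(\Gamma_1)$. Note that when $\Gamma_2=\mathbb{N}$ the condition reads $d_1\ge2$, which is consistent with this bound.

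\textbf{Combining.} Applying the induction hypothesis to $\Gamma_1$ and $\Gamma_2$ gives
\[
d_1m(\Gamma_1)\ge 2m(\Gamma_2)m(\Gamma_1)\ge 2\cdot2^{e_1-1}2^{e_2-1}=2^{e-1},
\]
and likewise for $d_2m(\Gamma_2)$. Therefore $m(\Gamma)\ge 2^{e-1}$.

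\textbf{Attainment.} Take
\[
\Gamma=\langle 2^{e-1},\,2^{e-2}\cdot3,\,2^{e-3}\cdot3^2,\dots,3^{e-1}\rangle .
\]
This is built by iterated gluing of $\mathbb{N}$: at each step we glue $3\Gamma'$ with $2^{e-1}\mathbb{N}$, where $\Gamma'$ is the previous semigroup and $2^{e-1}\in\Gamma'$ is not a minimal generator. The resulting semigroup is a complete intersection with multiplicity $2^{e-1}$.

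\textbf{Main obstacle.} The one point needing care is the legitimacy of the gluing description itself, namely that $\mathrm{Msg}(\Gamma)$ is exactly the union of the scaled generating sets and that $d_i$ is not a minimal generator. These are standard facts from \cite[\S 8.3]{RGS09} and \cite{AGS13}, and we invoke them there.
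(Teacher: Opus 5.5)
Your proof is correct. The paper gives no argument of its own for the bound and simply defers to \cite[Proposition 5]{AGS13}. Your strong induction over Delorme's decomposition is the natural self-contained proof. Its key estimate, $d_1\ge 2m(\Gamma_2)$ and $d_2\ge 2m(\Gamma_1)$, holds because a glue factor is a nonzero non-generator. The argument uses only Theorem~\ref{thm:delorme} and parts (1)--(2) of Lemma~\ref{lem:gluing}. The paper proves those two parts, so your ``main obstacle'' is already handled in the text. The nonzeroness of $d_i$ is automatic: $d_1=0$ would force $d_2=1\in\Gamma_1$, making $\Gamma_1=\mathbb{N}$ with $1$ a minimal generator.

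For attainment your route genuinely differs from the paper's. In the paper, the boundary $m=2^{e-1}$ is reached by the family $C_e(x)$ of Lemma~\ref{lem:cover}, e.g.\ $\langle 4,5,6\rangle$ and $\langle 8,9,10,12\rangle$. You use the iterated gluing $\Gamma_e=3\Gamma_{e-1}+2^{e-1}\mathbb{N}$. Yours is the quicker witness for the lemma as stated. You should still record the routine checks:
\begin{itemize}
\item $3\in\mathbb{N}\setminus\mathrm{Msg}(\mathbb{N})$ and $\gcd(3,2^{e-1})=1$;
\item $2^{e-1}=2m(\Gamma_{e-1})$ is a non-generator of $\Gamma_{e-1}$;
\item $m(\Gamma_e)=\min(3\cdot 2^{e-2},2^{e-1})=2^{e-1}$.
\end{itemize}

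What the paper's more elaborate family buys is the covering property needed later. Your family could not replace $C_e(x)$ in the proof of Theorem~\ref{thm:main-section}. That proof needs a semigroup of embedding dimension $e-1$ and multiplicity $2^{e-2}$ containing any prescribed $m\ge 2^{e-1}$ as a non-generator. Your $\Gamma_3=\langle 4,6,9\rangle$ has Frobenius number $11$, so $11\notin\Gamma_3$. Hence it cannot serve as $U$ for $(e,m)=(4,11)$.
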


\begin{theorem}\label{thm:rosales}(\cite[Theorem 5]{Ros01})
There exists a symmetric numerical semigroup with $e(\Gamma)=e$ and $m(\Gamma)=m$ if and only if $2\le e\le m-1$ or $(e,m)\in\{(1,1),(2,2)\}$.
\end{theorem}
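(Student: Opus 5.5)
The plan is to prove necessity from structural facts about the Apéry set, and sufficiency by an explicit construction through Kunz coordinates.

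\emph{Necessity.} If $m=1$ then $\Gamma=\mathbb{N}$, which gives $(e,m)=(1,1)$. Now let $m\ge 2$. Minimal generators other than $m$ lie in distinct nonzero residue classes modulo $m$, so $e\le m$. It remains to exclude $e=m\ge 3$. If $e=m$, then $\Gamma$ has maximal embedding dimension. For such semigroups the Apéry set $\mathrm{Ap}(\Gamma,m)\setminus\{0\}$ consists exactly of the other $m-1$ minimal generators. None of these is a sum of two others, so all $m-1$ are maximal with respect to $\le_\Gamma$, and the type is $m-1$. A symmetric semigroup has type $1$, which forces $m=2$. This gives $(e,m)=(2,2)$, realized by $\langle 2,3\rangle$. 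The case $e=1$ forces $\Gamma=\mathbb{N}$ and $m=1$. This proves the ``only if'' direction.

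\emph{Sufficiency.} Fix $2\le e\le m-1$. For each residue $i\in\{1,\dots,m-1\}$, write the Apéry element as $w_i=k_im+i$. I will use the Kunz description:
\begin{enumerate}
\item the integers $k_i\ge 1$ define a numerical semigroup iff $k_i+k_j\ge k_{i+j}$ for $i+j<m$, and $k_i+k_j+1\ge k_{i+j-m}$ for $i+j>m$;
\item the semigroup is symmetric iff there is a residue $f$ with $w_i+w_{f-i}=w_f$ for all $i$ (indices modulo $m$).
\end{enumerate}
A model case is $e=m-1$. Take $k_i=1$ for $1\le i\le m-2$ and $k_{m-1}=2$. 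Then $w_i+w_{m-1-i}=w_{m-1}$ for all $i$, so the semigroup is symmetric. Moreover $w_{m-1}=3m-1=(m+1)+(2m-2)$ is not a minimal generator when $m\ge 3$. Hence $\Gamma=\langle m,m+1,\dots,2m-2\rangle$, which is symmetric with $e=m-1$. For $e=2$, the semigroup $\langle m,m+1\rangle$ works.

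For general $e$, set $t=e-2$. I intend the residues $1,\dots,t$ to carry minimal generators $m+1,\dots,m+t$ with $k_i=1$. One further residue $r$ carries the last generator, and every other Apéry element is forced to be a sum of these. Concretely, I will take the ``staircase'' Kunz coordinates $k_i=\lceil i/t\rceil$ for residues reachable as sums of elements of $\{1,\dots,t\}$. Then I adjust the top residues, using one extra generator $g\equiv r\pmod m$, so that the map $i\mapsto f-i$ is an involution preserving $k_i+k_{f-i}=k_f$. The candidate is $\langle m,m+1,\dots,m+t,g\rangle$, where $g$ is chosen in terms of $q,r$ with $m-1=qt+r$. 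The known fact that $\langle m,m+1,\dots,m+t\rangle$ alone is symmetric iff $m\equiv 2\pmod t$ indicates that exactly one correcting generator is needed when this congruence fails. When it holds, one instead uses a different extra generator to reach embedding dimension $e$.

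The main obstacle is this last step. One must choose $g$ uniformly, verify the Kunz inequalities and the symmetry identity across all residue classes, and check that $g$ and the $m+i$ remain minimal. If a uniform $g$ proves awkward, my fallback is induction on $m$ with $e$ fixed, via gluing. Gluing preserves symmetry, and the embedding dimension adds. For example, glue a symmetric semigroup of type $(e-1,m')$ with $\mathbb{N}$, producing multiplicity $m$ from $m'$ by suitable scaling, and handle the small base cases by hand.
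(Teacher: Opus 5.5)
The paper does not prove this statement; it quotes it from Rosales as background, so your argument has to stand on its own. Your necessity part is correct: distinct residues give $e\le m$, and maximal embedding dimension forces type $m-1$, which is incompatible with symmetry unless $m=2$. Your cases $e=2$ and $e=m-1$ are also correct.

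The gap is the whole range $3\le e\le m-2$, which is the real content of the theorem. You never specify $g$ or verify the symmetry identity. In fact your template $\langle m,m+1,\dots,m+e-2,g\rangle$ cannot work in general. Take $(e,m)=(5,7)$. Minimality of $g$ over $\langle 7,8,9,10\rangle$ forces $g\in\{11,12,13\}$: $g$ must lie in residue $4$, $5$ or $6$ modulo $7$, and $18=8+10$, $19=9+10$, $20=10+10$ already lie in $\langle 7,8,9,10\rangle$. None of the three choices is symmetric. For $g=11$ and $g=12$ the Frobenius number is $13$, and $1,12$ (resp.\ $2,11$) are both gaps. For $g=13$ the Frobenius number is $12$, and $1,11$ are both gaps. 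Yet $\langle 7,8,11,12,13\rangle$ is symmetric: its Ap\'ery set $\{0,8,11,12,13,16,24\}$ is invariant under $w\mapsto 24-w$.

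Your gluing fallback does not rescue this. In any gluing $aS_1+bS_2$ you have $a\ge 2m(S_2)$ and $b\ge 2m(S_1)$, hence $m\ge 2m(S_1)m(S_2)$. On the other hand $e=e(S_1)+e(S_2)\le m(S_1)+m(S_2)\le m(S_1)m(S_2)+1$. So gluings only reach $e\le m/2+1$. Every pair $(m-2,m)$ with $m\ge 7$ is therefore out of reach of gluing; this is an infinite family, not a handful of base cases. In particular $(5,7)$ is reached by neither of your two approaches.

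One way to close the gap splits the range at $m/2$.
\begin{enumerate}
\item For $3\le e\le m/2$, glue as in the paper: $\Gamma=aU+m\mathbb{N}$ with $a>m$ prime and $U=\langle e,e+1,\dots,2e-2\rangle$. Here $U$ is symmetric by your own model case, $e(U)=e-1$, and every integer $\ge 2e$ lies in $U\setminus\mathrm{Msg}(U)$.
\item For $m/2\le e\le m-1$, put $s=m-1-e$ and $f=2s+1\le m-1$. Prescribe Kunz coordinates $k_f=3$, $k_i=2$ for $s<i<f$, and $k_i=1$ otherwise.
\end{enumerate}
In the second case the Kunz inequalities hold, since $k_f$ is the only coordinate equal to $3$ and $k_i+k_{f-i}=3$ for $i<f$. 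That identity, together with $k_i+k_{m+f-i}=2$ for $i>f$, is exactly $w_i+w_{f-i}=w_f$ (indices mod $m$), so the semigroup is symmetric. Its minimal generators are $m$ and the $m+i$ with $i\le s$ or $i>f$, because every $w_j$ with $s<j<f$ equals $w_s+w_{j-s}$. That gives exactly $e$ of them.
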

The answer for complete intersections is much simpler than for symmetric semigroups: as soon as $m\ge 2^{e-1}$, every multiplicity is realizable.

Our complete characterization is the following.
\begin{theorem}\label{thm:main}
There exists a complete intersection numerical semigroup $\Gamma$ with $e(\Gamma)=e$ and $m(\Gamma)=m$ if and only if $(e,m)=(1,1)$ or $e\ge 2$ and $m\ge 2^{e-1}$. Equivalently, the set of realized pairs is

\[
\{(1,1)\}\cup\{(e,m)\in\mathbb{Z}^2 : e\ge 2,\ m\ge 2^{e-1}\}.
\]
\end{theorem}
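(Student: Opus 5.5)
Necessity is quick from the earlier results. If $e=1$ then $\Gamma=\mathbb{N}$, since a numerical semigroup generated by a single element $a$ with $\gcd=1$ forces $a=1$; hence $m=1$. If $e\ge 2$, Lemma~\ref{lem:lower-bound} gives $m\ge 2^{e-1}$. So the content of Theorem~\ref{thm:main} is sufficiency: for every $e\ge 2$ and every $m\ge 2^{e-1}$ we must construct a complete intersection $\Gamma$ with $e(\Gamma)=e$ and $m(\Gamma)=m$.

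For sufficiency I will use an explicit family and induction on $e$ via gluing (Theorem~\ref{thm:delorme}). The base case $e=2$ is immediate: take $\Gamma=\langle m,m+1\rangle$ for $m\ge 2$. Every embedding-dimension-two numerical semigroup is a complete intersection, and here $e=2$ and $m(\Gamma)=m$.

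For the inductive step, suppose $\Gamma'=\langle A'\rangle$ is a complete intersection with $e(\Gamma')=e-1$ and $m(\Gamma')=m'$. I glue $2\Gamma'$ with $\langle b\rangle$ for an odd $b$ to form $\Gamma=\langle 2A'\cup\{b\}\rangle$. By the standard gluing criterion, this is a gluing provided $b\in\Gamma'\setminus A'$, $\gcd(2,b)=1$, and $2\notin\langle b\rangle$. The criterion is that $d_1 d_2\in(\text{each glued piece})$ with the pieces being $d_1\Gamma_1$ and $d_2\Gamma_2$; concretely, with $\Gamma_1=\Gamma'$, $d_1=2$, $\Gamma_2=\mathbb{N}$, $d_2=b$, we need $b\in\Gamma'$ and $2\in\mathbb{N}$, and each $d_i$ must not be a minimal generator of the corresponding factor. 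Then $e(\Gamma)=e$, and $\Gamma$ is a complete intersection by Theorem~\ref{thm:delorme}. Its multiplicity is $\min(2m',b)$.

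Doubling alone only reaches even multiplicities, so the construction must be flexible in two ways. To get $m=2m'$, choose $b$ odd, $b\in\Gamma'$, $b>2m'$, and $b$ not a minimal generator; a large odd element of $\Gamma'$ always exists because $\Gamma'$ is cofinite. To get an odd multiplicity $m$, make $b=m$ the smallest generator: I need a complete intersection $\Gamma'$ of embedding dimension $e-1$ containing $m$ as a non-minimal-generator element, with $2m(\Gamma')>m$. The cleaner alternative is to glue with $d_1=m$ and $d_2$ odd large: set $\Gamma=\langle d_2A'\cup\{m\}\rangle$ with $\gcd(m,d_2)=1$, $m\in\Gamma'$ non-minimal, and $d_2 m(\Gamma')>m$. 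I plan to take $\Gamma'=\langle 2^{e-2},\dots\rangle$ as the known extremal example attaining the bound in Lemma~\ref{lem:lower-bound}, of the form $\langle 2^{e-2},\,2^{e-3}\cdot 3,\dots,3^{e-2}\rangle$. I then need every $m\ge 2^{e-1}$ to lie in $\Gamma'$ and not be a minimal generator. That forces me to control the Frobenius number of $\Gamma'$, which is too large in general, so I will instead use a varying $\Gamma'$.

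The robust version uses induction with the strengthened hypothesis $P(e)$: for every $m\ge 2^{e-1}$ there is a complete intersection with multiplicity $m$ and embedding dimension $e$. Given a target $m\ge 2^{e-1}$, write $m=2q+r$ with $r\in\{0,1\}$ and $q\ge 2^{e-2}$. Take $\Gamma'$ from $P(e-1)$ with multiplicity $q$, and set
\[
\Gamma=\langle\, 2A' \cup \{b\}\,\rangle ,
\]
with $b$ odd, $b\in\Gamma'$ non-minimal, and $b$ large. This realizes $m=2q$. For odd $m=2q+1$, I instead take $\Gamma'$ with multiplicity $q+1$ and replace the doubling by a shift-style gluing $\langle \lambda A'\cup\{\mu\}\rangle$ with $\lambda,\mu$ chosen so that $\mu=m$ is the minimum. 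The step I expect to be the main obstacle is exactly this one: realizing odd $m$, especially near the boundary $m=2^{e-1}+1$. There I must exhibit concrete $\lambda\ge 2$ and $\mu=m\in\Gamma'$ non-minimal with $\gcd(\lambda,m)=1$ and $\lambda\, m(\Gamma')>m$. My first attempt is $\lambda=2$ with $\Gamma'$ of multiplicity $q+1>m/2$, needing $m\in\Gamma'$. I would secure this by strengthening $P(e)$ so that the constructed $\Gamma'$ contains all integers at least $2m(\Gamma')-1$, i.e.\ has small conductor, and track this conductor bound through each gluing.
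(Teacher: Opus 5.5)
\textbf{There is a genuine gap: the odd-multiplicity case fails as designed.} Your necessity argument is fine. So is the even case of your ``robust version'': for $m=2q$ with $q\ge 2^{e-2}$, gluing $2\Gamma'+b\mathbb{N}$ with $m(\Gamma')=q$ and $b$ a large odd non-minimal element of $\Gamma'$ gives embedding dimension $e$ and multiplicity $2q$.

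The odd case cannot work, and the obstruction is not technical. In any gluing $\lambda\Gamma'+\mu\mathbb{N}$ you need $\mu\in\Gamma'\setminus\mathrm{Msg}(\Gamma')$. A positive non-minimal element of $\Gamma'$ is a sum of two positive elements, so $\mu\ge 2m(\Gamma')$. With $m(\Gamma')=q+1$ and $\mu=m=2q+1=2m(\Gamma')-1$ this is impossible. If $m\in\Gamma'$ at all, it is a minimal generator. Then $2m\in 2A'$ is redundant in $\langle 2A'\cup\{m\}\rangle$, so the embedding dimension drops to $e-1$.

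More generally, $\lambda m(\Gamma')>\mu\ge 2m(\Gamma')$ rules out $\lambda=2$ for every choice of $\Gamma'$, and it forces $m(\Gamma')\le q$. Your conductor strengthening only puts $m$ into $\Gamma'$, never into $\Gamma'\setminus\mathrm{Msg}(\Gamma')$, so it misses the actual requirement. It is also false from the first step: a two-generated $\langle\mu,\nu\rangle$ with $3\le\mu<\nu$ has Frobenius number $\mu\nu-\mu-\nu\ge\mu^2-\mu-1\ge 2\mu-1$.

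\textbf{The missing ingredient.} It is the one you named in your ``cleaner alternative'' and then dropped. You need a complete intersection $\Gamma'$ with $e(\Gamma')=e-1$ and $m\in\Gamma'\setminus\mathrm{Msg}(\Gamma')$. Once you have it, $\Gamma=m\mathbb{N}+n\Gamma'$ with $n$ a large prime finishes the proof; this is exactly the paper's final step.

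This ingredient cannot be avoided. Suppose $m$ is prime and $\Gamma=aS_1+bS_2$ with $m=a\,m(S_1)$. Since $a$ is a positive non-minimal element of $S_2$, we have $a\ge 2m(S_2)\ge 2$. This forces $m(S_1)=1$, hence $S_1=\mathbb{N}$ and $a=m$. So $m\in S_2\setminus\mathrm{Msg}(S_2)$ with $e(S_2)=e-1$. Your hypothesis $P(e-1)$ controls only multiplicities and cannot produce such an $S_2$.

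\textbf{How the paper supplies it.} The paper uses a covering lemma, proved by induction on $e$: for every $x\ge 2^e$ there is a complete intersection $C_e(x)$ with $e(C_e(x))=e$, multiplicity $2^{e-1}$, and $x\in C_e(x)\setminus\mathrm{Msg}(C_e(x))$.
\begin{enumerate}
\item For even $x=2r$, take $2C_{e-1}(r)+n\mathbb{N}$ with $n$ a large odd non-minimal element. Then $x=2u+2v$ is inherited from $r=u+v$.
\item For odd $x$, take $n=x-2^{e-1}$, which is odd and at least $2^{e-1}$, and form $2C_{e-1}(n)+n\mathbb{N}$. Then $x=n+2\cdot 2^{e-2}$ is a sum of two generators.
\end{enumerate}
So odd targets are handled by making the new generator $n$ smaller than $x$ and writing $x$ as a sum. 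They are not handled by trying to make $x$ itself the odd generator over a doubled semigroup of multiplicity exceeding $x/2$.
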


This is the main result of the paper; its proof is given in Section~\ref{sec:main}. The proof is as follows. The necessity follows immediately from the lower bound $m(\Gamma)\ge 2^{e(\Gamma)-1}$. The sufficiency is constructive: first we construct a family of covering semigroups $C_e(x)$; for every $e\ge 2$ and every $x\ge 2^e$, the semigroup $C_e(x)$ is a complete intersection numerical semigroup of embedding dimension $e$ and multiplicity $2^{e-1}$ such that $x$ is not a minimal generator of $C_e(x)$; then we take $U=C_{e-1}(m)$ and use gluing to build $\Gamma=m\mathbb{N}+nU$, where $n$ is a suitable integer coprime to $m$. Both steps use only the basic facts about gluings.

The paper is organized as follows. Section~\ref{sec:prelim} fixes the notation and collects the preliminary facts, including the definitions of complete intersection and of gluing, and Delorme's gluing characterization. Section~\ref{sec:main} proves the main results: we first construct the covering family (Lemma~\ref{lem:cover}), then state the lower bound (Lemma~\ref{lem:lower-bound}), and finally give the proof of the main theorem; we also list a number of explicit examples. Section~\ref{sec:discussion} discusses the sharpness of the characterization, compares it with the symmetric case, and lists some open problems.

\section{Notation and preliminaries}\label{sec:prelim}

We use the standard notation of the theory of numerical semigroups. The minimal generating set $\mathrm{Msg}(\Gamma)$, the embedding dimension $e(\Gamma)$, and the multiplicity $m(\Gamma)$ of a numerical semigroup $\Gamma$ are defined in Section~\ref{sec:intro}. If $A\subseteq\mathbb{N}$ and $\gcd(A)=1$, then $\langle A\rangle$ is a numerical semigroup; conversely, every numerical semigroup is of the form $\langle A\rangle$.
\begin{definition}\label{def:ci}
Let $\Gamma$ be a numerical semigroup, and let $\rho(\Gamma)$ be the number of relations in a minimal presentation of $\Gamma$. If $\rho(\Gamma)=e(\Gamma)-1$, then $\Gamma$ is called a complete intersection. Equivalently, the local semigroup ring $R_\Gamma=k[\![\Gamma]\!]$ over a field $k$ is a complete intersection ring, i.e., its minimal free resolution is the Koszul complex.
\end{definition}

We now state the definition of gluing and its basic properties; they are the basis of all our constructions.
\begin{definition}\label{def:gluing}
Let $S_1,S_2$ be numerical semigroups with minimal generating sets $\mathrm{Msg}(S_1)$ and $\mathrm{Msg}(S_2)$. Let $a\in S_2\setminus\mathrm{Msg}(S_2)$ and $b\in S_1\setminus\mathrm{Msg}(S_1)$ with $\gcd(a,b)=1$. The numerical semigroup $S=aS_1+bS_2=\{ax+by : x\in S_1,\ y\in S_2\}$ is called the gluing of $S_1$ and $S_2$ at $(a,b)$.
\end{definition}
\begin{lemma}\label{lem:gluing}
Let $S=aS_1+bS_2$ be a gluing. Then:

\begin{enumerate}
\item $\mathrm{Msg}(S)=a\,\mathrm{Msg}(S_1)\cup b\,\mathrm{Msg}(S_2)$;
\item $e(S)=e(S_1)+e(S_2)$, and $m(S)=\min(a\,m(S_1),b\,m(S_2))$;
\item $S$ is a complete intersection if and only if both $S_1$ and $S_2$ are complete intersections;
\item conversely, every complete intersection numerical semigroup other than $\mathbb{N}$ is a gluing of two complete intersection numerical semigroups.
\end{enumerate}

\begin{proof}
Write $A_1=\mathrm{Msg}(S_1)$ and $A_2=\mathrm{Msg}(S_2)$.
\begin{enumerate}
\item Since $S=aS_1+bS_2=\langle aA_1\cup bA_2\rangle$, it suffices to show that no element of $aA_1\cup bA_2$ can be written as a sum of two positive elements of $S$. To this end we first establish the following divisibility fact. Fix $a_i\in A_1$; if $aa_i=ax+by$ ($x\in S_1$, $y\in S_2$), then $y=0$ and $x=a_i$. Indeed, from $by=a(a_i-x)$ and $\gcd(a,b)=1$ we get $a\mid y$; write $y=at$ ($t\ge 0$). If $t=0$, then $y=0$, hence $x=a_i$. If $t\ge 1$, then $a_i=x+bt$. If $x\ge 1$, then $a_i$ is a sum of two positive elements of $S_1$, contradicting $a_i\in A_1$; if $x=0$, then $a_i=bt$, and since $b\notin\mathrm{Msg}(S_1)$, the element $b$ is a sum of two positive elements of $S_1$, hence so is $bt$, again contradicting $a_i\in A_1$. Thus $y=0$ and $x=a_i$. Symmetrically, for every $b_j\in A_2$, if $bb_j=ax+by$, then $x=0$ and $y=b_j$. Now suppose that $aa_i=u+v$ with $u,v\in S\setminus\{0\}$. Write $u=ax_1+by_1$ and $v=ax_2+by_2$. By the fact above, $y_1=y_2=0$ and $x_1+x_2=a_i$, so $u=ax_1$ and $v=ax_2$ ($x_1,x_2\ge 1$), contradicting $a_i\in A_1$; hence $aa_i\in\mathrm{Msg}(S)$. Similarly $bb_j\in\mathrm{Msg}(S)$. Since $S$ is generated by $aA_1\cup bA_2$ and every minimal generator belongs to any generating set, we have $\mathrm{Msg}(S)=aA_1\cup bA_2$.
\item First note that $aA_1$ and $bA_2$ are disjoint: if $aa_i=bb_j$, then $bb_j=a\cdot 0+b\cdot b_j$ is a representation with $b_j>0$, contradicting the fact above. Hence, by (1), $e(S)=|\mathrm{Msg}(S)|=|A_1|+|A_2|=e(S_1)+e(S_2)$. Since the least positive element of $S$ is necessarily a minimal generator,
\[
m(S)=\min\mathrm{Msg}(S)=\min(a\,m(S_1),\,b\,m(S_2)).
\]
\end{enumerate}
Parts (3) and (4) are Theorem~\ref{thm:delorme}; the standard facts on gluings can also be found in \cite[\S 8.3]{RGS09} and \cite[\S 1]{AGS13}.
\end{proof}
\end{lemma}

\section{Main results}\label{sec:main}

The following lemma is the core of the construction: for every embedding dimension $e\ge 2$, it provides a family of complete intersection numerical semigroups whose multiplicity equals the least possible value $2^{e-1}$, and the family contains semigroups with arbitrarily large non-generators.
\begin{lemma}\label{lem:cover}
For every integer $e\ge 2$ and every integer $x\ge 2^e$ there exists a complete intersection numerical semigroup $C_e(x)$ with $e(C_e(x))=e$, $m(C_e(x))=2^{e-1}$, and $x\in C_e(x)\setminus\mathrm{Msg}(C_e(x))$.

\begin{proof}
We argue by induction on $e$. Base case $e=2$: take $C_2(x)=\langle 2,3\rangle$. Then $e(C_2(x))=2$ and $m(C_2(x))=2=2^{2-1}$; every integer $x\ge 4$ is a sum of at least two positive elements of $\langle 2,3\rangle$, so $x\notin\mathrm{Msg}(C_2(x))$.

Inductive step: assume the statement holds for $e-1$ ($e\ge 3$), and set $m_0=2^{e-2}=m(C_{e-1}(\cdot))$. Fix $x\ge 2^e$ and split into two cases according to the parity of $x$.

Case 1: $x$ is even. Write $x=2r$. Then $r=x/2\ge 2^{e-1}$, so the induction hypothesis yields a complete intersection $U=C_{e-1}(r)$ with $e(U)=e-1$, $m(U)=m_0$, and $r\in U\setminus\mathrm{Msg}(U)$. Since $U$ has even multiplicity $m_0$ and $\gcd(U)=1$, the semigroup $U$ contains an odd minimal generator; adding sufficiently many copies of $m_0$ to this generator produces an odd element of $U$ which is not a minimal generator. Hence we may choose an odd element $n\in U\setminus\mathrm{Msg}(U)$ with $n>2m_0=2^{e-1}$.

Define $C_e(x)=2U+n\mathbb{N}$. Since $2\in\mathbb{N}\setminus\mathrm{Msg}(\mathbb{N})$, $n\in U\setminus\mathrm{Msg}(U)$, and $\gcd(2,n)=1$, this is a gluing of $\mathbb{N}$ and $U$. By Lemma~\ref{lem:gluing}, $C_e(x)$ is a complete intersection, and $e(C_e(x))=1+e(U)=e$, $m(C_e(x))=\min(n,2m_0)=2m_0=2^{e-1}$ (because $n>2m_0$).

Finally, $r\in U\setminus\mathrm{Msg}(U)$ means that $r=u+v$ for positive elements $u,v\in U$. Hence $x=2r=2u+2v\in 2U\subseteq C_e(x)$, and $2u,2v$ are positive elements of $C_e(x)$ strictly smaller than $x$; therefore $x$ is not a minimal generator.

Case 2: $x$ is odd. Put $n=x-2^{e-1}$. Since $x\ge 2^e$, we have $n\ge 2^{e-1}$. By the induction hypothesis, $U=C_{e-1}(n)$ is a complete intersection with $e(U)=e-1$, $m(U)=m_0$, and $n\in U\setminus\mathrm{Msg}(U)$. Define $C_e(x)=2U+n\mathbb{N}$. As in Case 1, this is a gluing of $\mathbb{N}$ and $U$: $2\in\mathbb{N}\setminus\mathrm{Msg}(\mathbb{N})$, $n\in U\setminus\mathrm{Msg}(U)$, and $\gcd(2,n)=1$ because $n$ is odd. Hence $C_e(x)$ is a complete intersection, and $e(C_e(x))=e$, $m(C_e(x))=\min(n,2m_0)=2m_0=2^{e-1}$, where the last equality holds because $n\ge 2^{e-1}=2m_0$.

It remains to show that $x$ is not a minimal generator. The element $m_0$ is a minimal generator of $U$, so $2m_0\in\mathrm{Msg}(2U)\subseteq\mathrm{Msg}(C_e(x))$, and $n\in\mathrm{Msg}(C_e(x))$. Since $x=n+2^{e-1}=n+2m_0$, the element $x$ is a sum of two positive elements of $C_e(x)$, hence $x\notin\mathrm{Msg}(C_e(x))$. This completes the induction.
\end{proof}
\end{lemma}

\begin{example}\label{ex:cover}
$C_3(9)=\langle 4,5,6\rangle$ and $C_4(17)=\langle 8,9,10,12\rangle$.
\end{example}

\begin{theorem}\label{thm:necessity}
If a complete intersection numerical semigroup $\Gamma$ satisfies $e(\Gamma)=e$ and $m(\Gamma)=m$, then $(e,m)=(1,1)$ or $e\ge 2$ and $m\ge 2^{e-1}$.

\begin{proof}
If $e(\Gamma)=1$, then $\Gamma=\mathbb{N}$ and $m(\Gamma)=1$. If $e(\Gamma)\ge 2$, Lemma~\ref{lem:lower-bound} gives $m(\Gamma)\ge 2^{e(\Gamma)-1}$. Hence every realized pair lies in the stated set.
\end{proof}
\end{theorem}

We now restate the main result and complete its proof.
\begin{theorem}\label{thm:main-section}
There exists a complete intersection numerical semigroup $\Gamma$ with $e(\Gamma)=e$ and $m(\Gamma)=m$ if and only if $(e,m)=(1,1)$ or $e\ge 2$ and $m\ge 2^{e-1}$. Equivalently, the set of realized pairs is

\[
\{(1,1)\}\cup\{(e,m)\in\mathbb{Z}^2 : e\ge 2,\ m\ge 2^{e-1}\}.
\]

\begin{proof}
The necessity was established in Theorem~\ref{thm:necessity}, so it remains to prove sufficiency. For $(1,1)$ take $\Gamma=\mathbb{N}$. For $e=2$ and $m\ge 2$, the two-generated semigroup $\Gamma=\langle m,m+1\rangle$ is a complete intersection with $e(\Gamma)=2$ and $m(\Gamma)=m$.

Assume $e\ge 3$ and $m\ge 2^{e-1}$. By Lemma~\ref{lem:cover} applied with $e-1$ in place of $e$, there is a complete intersection numerical semigroup $U=C_{e-1}(m)$ with

\[
e(U)=e-1,\qquad m(U)=2^{e-2},\qquad m\in U\setminus\mathrm{Msg}(U).
\]

Choose an integer $n\ge 2$ such that $\gcd(n,m)=1$ and $n\,2^{e-2}>m$; for instance, take a prime $n>\max\{2,m,m/2^{e-2}\}$. Define

\[
\Gamma=m\mathbb{N}+nU.
\]

Here $m\in U\setminus\mathrm{Msg}(U)$, $n\in\mathbb{N}\setminus\mathrm{Msg}(\mathbb{N})$ (because $n\ge 2$), and $\gcd(m,n)=1$, so $\Gamma$ is the gluing of $\mathbb{N}$ and $U$. Both factors are complete intersections; by Lemma~\ref{lem:gluing}, $\Gamma$ is a complete intersection, and

\[
e(\Gamma)=1+e(U)=e,\qquad m(\Gamma)=\min(m,n\,2^{e-2})=m,
\]

the minimum being $m$ because $n\,2^{e-2}>m$. Hence the stated set is exactly the set of realized pairs.
\end{proof}
\end{theorem}

Using SINGULAR we obtained the following explicit semigroups for small parameters $(e,m)$:
\begin{table}[htbp]
\centering
\caption{Explicit complete intersection numerical semigroups for small pairs $(e,m)$}\label{tab:examples}
\begin{tabular}{ll}
$(e,m)$ & minimal generating set \\ \hline
$(3,4)$ & $\langle 4,5,6\rangle$ \\
$(3,5)$ & $\langle 5,6,9\rangle$ \\
$(3,6)$ & $\langle 6,7,8\rangle$ \\
$(4,8)$ & $\langle 8,9,10,12\rangle$ \\
$(4,9)$ & $\langle 9,20,25,30\rangle$ \\
$(4,10)$ & $\langle 10,11,12,18\rangle$ \\
$(5,16)$ & $\langle 16,136,153,170,204\rangle$ \\
$(5,17)$ & $\langle 17,24,27,30,36\rangle$ \\
$(6,32)$ & $\langle 32,592,629,666,740,888\rangle$ \\
\hline
\end{tabular}
\end{table}

The last three rows are representative examples checked with SINGULAR during the verification: in each case, the toric ideal of $k[\![\Gamma]\!]$ is generated by exactly $e-1$ relations, confirming that $\Gamma$ is a complete intersection.

\section{Discussion and outlook}\label{sec:discussion}

The characterization shows that the lower bound $m\ge 2^{e-1}$ is the only obstruction: every pair above the boundary curve $m=2^{e-1}$ is realizable. The covering family $C_e(x)$ of Lemma~\ref{lem:cover} shows that the boundary itself is attainable for every $e$, and the gluing step $m\mathbb{N}+nU$ shows how to increase the multiplicity while keeping the embedding dimension fixed.

Every complete intersection numerical semigroup is symmetric, so every pair realized by a complete intersection must be realized by a symmetric semigroup. Rosales proved that symmetric semigroups realize exactly the pairs with $2\le e\le m-1$ together with the exceptional pairs $(1,1)$ and $(2,2)$ \cite{Ros01}. Our theorem shows that for $e=2,3$ the two realizable regions coincide, while for $e\ge 4$ complete intersections impose the strictly stronger constraint $m\ge 2^{e-1}>e+1$. In other words, the symmetric classification is only necessary, not sufficient, for our problem: the region $e+1\le m<2^{e-1}$ (nonempty for $e\ge 4$) is filled with symmetric semigroups but contains no complete intersection semigroup.

The proof is constructive and elementary. Natural follow-ups include: (i) formalizing the covering lemma and the main theorem with a proof assistant; (ii) studying the minimal Frobenius number or genus of semigroups realizing a given pair $(e,m)$; (iii) computing the same realizable regions for neighboring classes such as $\eta$-minimal semigroups or free semigroups.

\bibliographystyle{amsplain}
\bibliography{refs}

@misc{MS24,
  title = {Open problems on relations of numerical semigroups},
  author = {Moscariello, Alessio and Sammartano, Alessio},
  howpublished = {arXiv:2406.00790v2},
  year = {2026},
  note = {Published in Recent Progress in Ring and Factorization Theory, Springer Proc. Math. Stat. 477 (2025), DOI 10.1007/978-3-031-75326-8\_16; Problem 29, Section 7, p.~12}
}

@article{AGS13,
  title = {Constructing the set of complete intersection numerical semigroups with a given {Frobenius} number},
  volume = {24},
  journal = {Applicable Algebra in Engineering, Communication and Computing},
  publisher = {Springer},
  author = {Assi, A. and Garc\'{i}a-S\'{a}nchez, P. A.},
  year = {2013},
  number = {2},
  pages = {133--148},
  doi = {10.1007/s00200-013-0186-z},
  eprint = {1204.4258},
  archivePrefix = {arXiv}
}

@article{EHOOPK24,
  title = {On the cardinality of minimal presentations of numerical semigroups},
  volume = {7},
  journal = {Algebraic Combinatorics},
  author = {Elmacioglu, Ceyhun and Hilmer, Kieran and O'Neill, Christopher and Okandan, Melin and Park-Kaufmann, Hannah},
  year = {2024},
  number = {3},
  pages = {753--771},
  doi = {10.5802/alco.354}
}

@article{Del76,
  title = {Sous-mono{\"i}des d'intersection compl{\`e}te de $\mathbb{N}$},
  volume = {9},
  journal = {Annales scientifiques de l'{\'E}cole normale sup{\'e}rieure},
  author = {Delorme, Charles},
  year = {1976},
  number = {1},
  pages = {145--154},
  doi = {10.24033/asens.1307}
}

@book{RGS09,
  title = {Numerical Semigroups},
  author = {Rosales, J. C. and Garc\'{i}a-S\'{a}nchez, P. A.},
  series = {Developments in Mathematics},
  volume = {20},
  publisher = {Springer},
  address = {New York},
  year = {2009}
}

@article{Ros01,
  title = {Symmetric numerical semigroups with arbitrary multiplicity and embedding dimension},
  volume = {129},
  journal = {Proceedings of the American Mathematical Society},
  author = {Rosales, J. C.},
  year = {2001},
  number = {8},
  pages = {2197--2203},
  doi = {10.1090/s0002-9939-01-05819-1}
}

\end{document}